\documentclass[11pt]{article}

\usepackage[english]{babel}
\usepackage{hyperref}
\usepackage{setspace,color}
\usepackage{graphicx,boxedminipage}
\usepackage{url}
\usepackage{amsmath,amssymb}
\usepackage{booktabs}
\usepackage{lineno}
\usepackage{xcolor}
\usepackage{bbm}
\usepackage{float}
\usepackage{subcaption}
\usepackage{tikz}
\usepackage[margin=1in]{geometry}
\usepackage[T1]{fontenc}
\usepackage{lmodern}
\usepackage{amsthm}

\newtheorem{Theorem}{Theorem}[section]
\newtheorem{Lemma}[Theorem]{Lemma}

\newtheorem{Remark}[Theorem]{Remark}

\numberwithin{equation}{section}

\newcommand{\eee}{{\rm e}}
\newcommand{\ddd}{{\rm d}}
\allowdisplaybreaks
\pagestyle{plain}

\newcommand{\X}{\mathbf{X}}

\renewcommand{\P}{\mathbb{P}}
\newcommand{\E}{\mathbb{E}}
\newcommand{\dd}{\mathrm{d}}

\title{Optimal Structure of Signal Networks for\\ Efficient Information Aggregation}
\date{}

\author{%
\begin{minipage}{\textwidth}\centering
Bernd Heidergott$^{1}$, Frank den Hollander$^{2}$, Ines Lindner$^{3*}$, Azadeh Parvaneh$^{2}$\\[0.5em]
\small
$^{1}$ VU University Amsterdam, Department of Operations Analytics, De Boelelaan 1105, 1081 HV Amsterdam, The Netherlands\\
$^{2}$ Mathematical Institute, Leiden University, Einsteinweg 55, 2333 CC Leiden, The Netherlands\\
$^{3}$ VU University Amsterdam, Department of Economics, De Boelelaan 1105, 1081 HV Amsterdam, The Netherlands\\[0.5em]
$^{*}$Corresponding author: \texttt{i.d.lindner@vu.nl}
\end{minipage}
}

\begin{document}

\maketitle

\begin{abstract}
This paper develops a mathematical framework to study signal networks, in which nodes can be active or inactive, and their activation or deactivation is driven by external signals and the states of the nodes to which they are connected via links. The focus is on determining the optimal number of key nodes (= highly connected and structurally important nodes) required to represent the global activation state of the network accurately. Motivated by neuroscience, medical science, and social science examples, we describe the node dynamics as a continuous-time inhomogeneous Markov process. Under mean-field and homogeneity assumptions, appropriate for large scale-free and disassortative signal networks, we derive differential equations characterising the global activation behaviour and compute the expected hitting time to network triggering. Analytical and numerical results show that two or three key nodes are typically sufficient to approximate the overall network state well, balancing sensitivity and robustness. Our findings provide insight into how natural systems can efficiently aggregate information by exploiting minimal structural components.

\bigskip\noindent
\emph{Key words}: Signal network, aggregated nodes, key nodes, active versus inactive nodes, triggering.\\
\emph{MSC2020}: 60B20,05C80,46L54.\\
\emph{Acknowledgment}: FdH and AP were supported by the Netherlands Organisation for Scientific Research (NWO) through NETWORKS Gravitation Grant no.\ 024.002.003. AP was also supported by the European Union's Horizon 2020 research and innovation programme under the Marie Sk\l odowska-Curie grant agreement no.\ 101034253.

\vspace{0.3cm}\hspace{-0.3cm}
\includegraphics[height=3em]{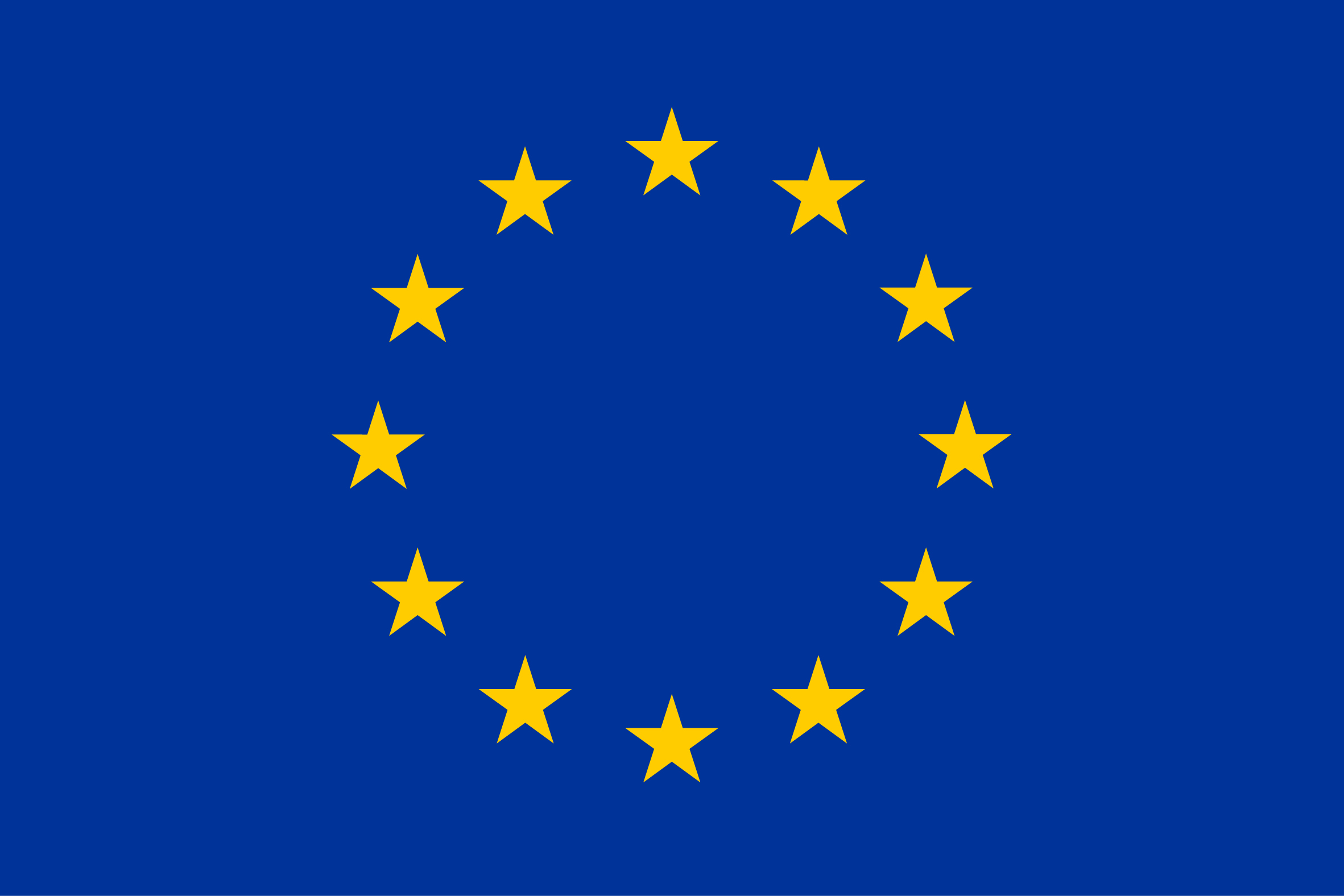}

\end{abstract}

\maketitle


\section{Introduction}

This paper investigates how to efficiently aggregate nodes in a signal network to accurately and concisely represent the network's overall state. Our analysis is motivated by the observation that, in many cases, a very small number of nodes suffices to capture the essential behaviour of the entire network, even when the network itself is large. Section~\ref{ss.SN} introduces the concept of signal networks and outlines the study's main objectives. Sections~\ref{ss.BN} through \ref{ss.FN} illustrates our framework's relevance through three key examples: brain networks, aging networks, and friendship networks.


\subsection{Signal networks}\label{ss.SN}

Consider a network with a large number of nodes. The state of a node is modeled with the help of a strictly decreasing {\em activation function} $\phi (u)$, $u \geq 0$, as follows. If at time $t$ a node is triggered by an exogenous signal, then its state at time $t' > t$ is $\phi (t' - t)$. As long as the state is larger than or equal to a threshold value, the node is {\em active}, while as soon as the state drops below the threshold value, the node is {\em inactive}. The prior information gets overridden when the node is triggered again at a later point in time, and the triggering process restarts independently of the past.

We represent large sets of nodes as an {\em aggregated node}. We assume that the time instances of signals at individual nodes form a general renewal process. Under this mild assumption, the process of active versus inactive states for the aggregated nodes is well approximated by a {\em Poisson point process} with exponentially distributed holding times between active states due to the {\em superposition principle} for a large number of sparse point processes \cite[Theorem 3.1]{Aging}. Our primary focus will be on the subnetwork formed by the aggregated nodes, which we refer to as the {\em signal network}. The network is encoded as `triggered' when a fraction at least $\gamma \in (0,1)$ of the aggregated nodes is active. The time-lapse during which an aggregated node stays active or inactive depends on the state of its neighboring aggregated nodes. We assume that the signal network is \textit{scale-free}, meaning that its empirical degree distribution is approximately a power law, and \textit{disassortative}, meaning that node degrees tend to be negatively correlated. We assume that the signal network contains $n \gg 1$ aggregated nodes, of which $1 \leq k \ll n$ are {\em key nodes}, i.e., `hubs' in the signal network that play a critical role in the functioning of the network, in the sense that when they are active the entire network is triggered. See Fig.\ \ref{fig:network} for an illustration.
 
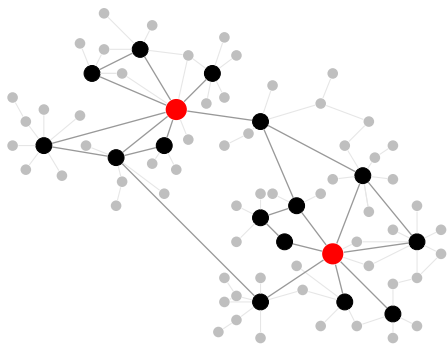
\begin{figure}[htbp]
\centering
\begin{tikzpicture}[scale=2, every node/.style={circle, draw, minimum size=4pt, inner sep=1pt}]    
    \node[fill=red, draw=none, minimum size=8pt] (A) at (0, 1.5) {}; 
    \node[fill=red, draw=none, minimum size=8pt] (B) at (1.3, 0.3) {}; 
    \node[fill=black, minimum size=6pt] (0) at (0.7, 1.4) {};
   \node[fill=black, minimum size=6pt] (1) at (-0.3, 2) {};
     \node[fill=black, minimum size=6pt] (2) at (0.3, 1.8) {};
   \node[fill=black, minimum size=6pt] (3) at (-0.7, 1.8) {};
     \node[fill=black, minimum size=6pt] (4) at (-1.1, 1.2) {};
     \node[fill=black, minimum size=6pt] (5) at (-0.5, 1.1) {};
        \node[fill=black, minimum size=6pt](5b) at (-0.1, 1.2) {};
   \node[fill=black, minimum size=6pt] (6) at (1, 0.7) {};
         \node[fill=black, minimum size=6pt] (6b) at (1.55, 0.95) {};
  \node[fill=black, minimum size=6pt](7) at (0.7, 0.6) {};
      \node[fill=black, minimum size=6pt] (8) at (0.9, 0.4) {};
     \node[fill=black, minimum size=6pt] (9) at (2.0, 0.4) {};
    \node[fill=black, minimum size=6pt](10) at (1.8, -0.2) {};
     \node[fill=black, minimum size=6pt](11) at (1.4, -0.1) {};
    \node[fill=black, minimum size=6pt] (12) at (0.7, -0.1) {};
        \node[fill=gray!50, draw=none, minimum size=4pt] (13) at (0.7, -0.4) {};
         \node[fill=gray!50, draw=none, minimum size=4pt] (14) at (0.35, -0.35) {};
\node[fill=gray!50, draw=none, minimum size=4pt] (15) at (0.5, -0.25) {};
  \node[fill=gray!50, draw=none, minimum size=4pt] (16) at (0.4, -0.1) {};
  \node[fill=gray!50, draw=none, minimum size=4pt] (17) at (0.5, -0.05) {};
 \node[fill=gray!50, draw=none, minimum size=4pt] (18) at (0.4, 0.1) {};
 \node[fill=gray!50, draw=none, minimum size=4pt] (19) at (0.7, 0.1) {};
  \node[fill=gray!50, draw=none, minimum size=4pt] (20) at (1.05, 0) {};
\node[fill=gray!50, draw=none, minimum size=4pt] (21) at (1, 0.2) {};
\node[fill=gray!50, draw=none, minimum size=4pt] (22) at (1.2, -0.3) {};
\node[fill=gray!50, draw=none, minimum size=4pt] (23) at (1.5, -0.3) {};
 \node[fill=gray!50, draw=none, minimum size=4pt] (24) at (1.8, -0.4){};
 \node[fill=gray!50, draw=none, minimum size=4pt] (25) at (2, -0.3) {};
\node[fill=gray!50, draw=none, minimum size=4pt] (26) at (1.8, 0.05){};
 \node[fill=gray!50, draw=none, minimum size=4pt] (27) at (2, 0.1){};
\node[fill=gray!50, draw=none, minimum size=4pt] (28) at (1.6, 0.2){};
 \node[fill=gray!50, draw=none, minimum size=4pt] (29) at (2.2, 0.3){};
 \node[fill=gray!50, draw=none, minimum size=4pt] (30) at (2.2, 0.5){};
 \node[fill=gray!50, draw=none, minimum size=4pt] (31) at (2, 0.7){};
\node[fill=gray!50, draw=none, minimum size=4pt] (32) at (1.8, 0.5){};
\node[fill=gray!50, draw=none, minimum size=4pt] (33) at (1.5, 0.4){};
\node[fill=gray!50, draw=none, minimum size=4pt] (35) at (1.3, 0.92) {};
\node[fill=gray!50, draw=none, minimum size=4pt] (36) at (1.6, 0.65) {};
\node[fill=gray!50, draw=none, minimum size=4pt] (37) at (1.8, 0.92) {};
\node[fill=gray!50, draw=none, minimum size=4pt] (38) at (1.65, 1.1) {};
\node[fill=gray!50, draw=none, minimum size=4pt] (39) at (1.8, 1.2) {};
\node[fill=gray!50, draw=none, minimum size=4pt] (40) at (1.4, 1.2) {};
\node[fill=gray!50, draw=none, minimum size=4pt] (41) at (1.6, 1.4) {};
 \node[fill=gray!50, draw=none, minimum size=4pt] (42) at (1.2, 1.55) {};
 \node[fill=gray!50, draw=none, minimum size=4pt] (43) at (1.2, 0.8) {};
\node[fill=gray!50, draw=none, minimum size=4pt] (44) at (0.8, 0.8) {};
 \node[fill=gray!50, draw=none, minimum size=4pt] (45) at (0.5, 0.7) {};
 \node[fill=gray!50, draw=none, minimum size=4pt] (46) at (0.5, 0.4) {};
\node[fill=gray!50, draw=none, minimum size=4pt] (47) at (0.7, 0.8) {};
\node[fill=gray!50, draw=none, minimum size=4pt] (48) at (1.3, 1.8) {};
\node[fill=gray!50, draw=none, minimum size=4pt] (49) at (0.8, 1.7) {};
\node[fill=gray!50, draw=none, minimum size=4pt] (50) at (0.6, 1.3) {};
\node[fill=gray!50, draw=none, minimum size=4pt] (51) at (0.4, 1.2) {};
\node[fill=gray!50, draw=none, minimum size=4pt] (53) at (0.4, 1.6) {};
\node[fill=gray!50, draw=none, minimum size=4pt] (54) at (0.25, 1.55) {};
\node[fill=gray!50, draw=none, minimum size=4pt] (55) at (0.5, 1.95) {};
\node[fill=gray!50, draw=none, minimum size=4pt] (56) at (0.4, 2.1) {};
\node[fill=gray!50, draw=none, minimum size=4pt] (57) at (0.1, 1.95) {};
\node[fill=gray!50, draw=none, minimum size=4pt] (58) at  (-0.2, 2.2) {};
 \node[fill=gray!50, draw=none, minimum size=4pt] (59) at  (-0.6, 2.3) {};
 \node[fill=gray!50, draw=none, minimum size=4pt] (60) at  (-0.6, 2.0) {};
\node[fill=gray!50, draw=none, minimum size=4pt] (61) at  (-0.8, 2.05) {};
 \node[fill=gray!50, draw=none, minimum size=4pt] (62) at  (-0.45, 1.8) {};
 \node[fill=gray!50, draw=none, minimum size=4pt] (63) at  (0.1, 1.25) {};
 \node[fill=gray!50, draw=none, minimum size=4pt] (64) at   (-0.8, 1.45) {};
 \node[fill=gray!50, draw=none, minimum size=4pt] (65) at   (-1.1, 1.5) {};
\node[fill=gray!50, draw=none, minimum size=4pt] (66) at   (-1.25, 1.4) {};
\node[fill=gray!50, draw=none, minimum size=4pt] (67) at   (-1.25, 1) {};
\node[fill=gray!50, draw=none, minimum size=4pt] (68) at   (-1.36, 1.2) {};
\node[fill=gray!50, draw=none, minimum size=4pt] (69) at   (-1.36, 1.6) {};
 \node[fill=gray!50, draw=none, minimum size=4pt] (70) at   (-0.95, 0.95) {};
\node[fill=gray!50, draw=none, minimum size=4pt] (71) at  (-0.75, 1.2) {};
 \node[fill=gray!50, draw=none, minimum size=4pt] (72) at  (-0.45, 0.9) {};
\node[fill=gray!50, draw=none, minimum size=4pt] (73) at  (-0.5, 0.7) {};
 \node[fill=gray!50, draw=none, minimum size=4pt] (74) at  (-0.2, 1.05) {};
\node[fill=gray!50, draw=none, minimum size=4pt] (75) at  (-0, 1) {};
\node[fill=gray!50, draw=none, minimum size=4pt] (76) at  (-0.1, 0.8) {};
\draw (12) edge[gray!20, thin] (13);
 \draw (12) edge[gray!20, thin] (15);
 \draw (14) edge[gray!20, thin] (15);
  \draw (16) edge[gray!20, thin] (12);
   \draw (17) edge[gray!20, thin] (18);
  \draw (12) edge[gray!20, thin] (17);
  \draw (16) edge[gray!20, thin] (17);
   \draw (12) edge[gray!20, thin] (19);
  \draw (12) edge[gray!20, thin] (20);
   \draw (11) edge[gray!20, thin] (20);
 \draw (11) edge[gray!20, thin] (21);
 \draw (11) edge[gray!20, thin] (22);
   \draw (11) edge[gray!20, thin] (23);
  \draw (10) edge[gray!20, thin] (23);
  \draw (10) edge[gray!20, thin] (24);
       \draw (10) edge[gray!20, thin] (25);
  \draw (10) edge[gray!20, thin] (26);
 \draw (27) edge[gray!20, thin] (26);
 \draw (27) edge[gray!20, thin] (9);
    \draw (9) edge[gray!20, thin] (28);
     \draw (B) edge[gray!20, thin] (28);
 \draw (9) edge[gray!20, thin] (29);
 \draw (27) edge[gray!20, thin] (29);
  \draw (9) edge[gray!20, thin] (30);
 \draw (9) edge[gray!20, thin] (31);
 \draw (9) edge[gray!20, thin] (32);
 \draw (9) edge[gray!20, thin] (33);
   \draw (B) edge[gray!20, thin] (33);
 \draw (6b) edge[gray!20, thin] (36);
  \draw (6b) edge[gray!20, thin] (35);
\draw (6b) edge[gray!20, thin] (36);
\draw (6b) edge[gray!20, thin] (37);
   \draw (6b) edge[gray!20, thin] (38);
   \draw (38) edge[gray!20, thin] (39);
  \draw (6b) edge[gray!20, thin] (40);
   \draw (40) edge[gray!20, thin] (41);
   \draw (42) edge[gray!20, thin] (41);
 \draw (6) edge[gray!20, thin] (43);
 \draw (6) edge[gray!20, thin] (44);
  \draw (7) edge[gray!20, thin] (45);
   \draw (7) edge[gray!20, thin] (47);
  \draw (7) edge[gray!20, thin] (46);
 \draw (42) edge[gray!20, thin] (48);
 \draw (42) edge[gray!20, thin] (0);
\draw (49) edge[gray!20, thin] (0);
 \draw (50) edge[gray!20, thin] (0);
 \draw (50) edge[gray!20, thin] (51);
 \draw (2) edge[gray!20, thin] (57);
    \draw (2) edge[gray!20, thin] (56);
\draw (2) edge[gray!20, thin] (55);
\draw (2) edge[gray!20, thin] (54);
\draw (2) edge[gray!20, thin] (53);
\draw (A) edge[gray!20, thin] (57);
\draw (1) edge[gray!20, thin] (59);
\draw (1) edge[gray!20, thin] (58);
\draw (1) edge[gray!20, thin] (57);
\draw (1) edge[gray!20, thin] (60);
 \draw (3) edge[gray!20, thin] (60);
 \draw (3) edge[gray!20, thin] (61);
\draw (3) edge[gray!20, thin] (62);
\draw (3) edge[gray!20, thin] (62);
\draw (A) edge[gray!20, thin] (63);
  \draw (A) edge[gray!20, thin] (62);
   \draw (4) edge[gray!20, thin] (70);
 \draw (4) edge[gray!20, thin] (68);
\draw (4) edge[gray!20, thin] (67);
 \draw (4) edge[gray!20, thin] (66);
\draw (4) edge[gray!20, thin] (65);                                                                                                                                            \draw (4) edge[gray!20, thin] (64);                                                                                                                                            \draw (69) edge[gray!20, thin] (66);                                                                                                                                              \draw (5) edge[gray!20, thin] (71);                                                                                                                                                    \draw (5) edge[gray!20, thin] (72);                                                                                                                                                      \draw (73) edge[gray!20, thin] (72);                                                                                                                                                       \draw (5b) edge[gray!20, thin] (74);                                                                                                                                                        \draw (5b) edge[gray!20, thin] (75);                                                                                                                                                         \draw (5) edge[gray!20, thin] (76);
        \draw[gray!80, thick, line width=0.5pt] (A) -- (5b);
        \draw[gray!80, thick, line width=0.5pt] (5b) -- (5);
        \draw[gray!80, thick, line width=0.5pt] (0) -- (A);
    \draw[gray!80, thick, line width=0.5pt] (A) -- (5);
    \draw[gray!80, thick, line width=0.5pt] (A) -- (1);
    \draw[gray!80, thick, line width=0.5pt] (A) -- (2);
    \draw[gray!80, thick, line width=0.5pt] (A) -- (3);
    \draw[gray!80, thick, line width=0.5pt] (A) -- (4);
   \draw[gray!80, thick, line width=0.5pt] (7) -- (6);
    \draw[gray!80, thick, line width=0.5pt] (1) -- (3);
    \draw[gray!80, thick, line width=0.5pt] (4) -- (5);
   \draw[gray!80, thick, line width=0.5pt] (7) -- (8);
   \draw[gray!80, thick, line width=0.5pt] (0) -- (6);
   \draw[gray!80, thick, line width=0.5pt] (6) -- (B);
    \draw[gray!80, thick, line width=0.5pt] (B) -- (8);
    \draw[gray!80, thick, line width=0.5pt] (B) -- (9);
    \draw[gray!80, thick, line width=0.5pt] (B) -- (10);
    \draw[gray!80, thick, line width=0.5pt] (B) -- (11);
    \draw[gray!80, thick, line width=0.5pt] (B) -- (12);
  \draw[gray!80, thick, line width=0.5pt] (5) -- (12);
  \draw[gray!80, thick, line width=0.5pt] (6b) -- (B);
  \draw[gray!80, thick, line width=0.5pt] (6b) -- (0);
  \draw[gray!80, thick, line width=0.5pt] (6b) -- (9);
\end{tikzpicture}
\caption{\small An example of a signal network \cite{Aging}. The aggregated nodes are black and the key nodes are red. The grey nodes form the background of the aggregated nodes.}
\label{fig:network}
\end{figure}

Think of the bodily reaction of creating `goosebumps' after an emotional experience. For this reaction to occur, the network of aggregated nodes has to be triggered across a certain threshold. Our claim is that this triggering is best organised by aggregating the overall state of the network into a monitoring of the state of the key nodes. Our main question is: What is the optimal value for $k$ to do this efficiently?
\begin{itemize}
\item 
\textbf{Claim:} If the network has $k=2$ or $3$ key nodes, then the states of these key nodes capture the overall state of the network sufficiently well. In contrast, for $k=1$, the key node may already be active when far less than $\gamma$ of the aggregated nodes are active, while for $k \geq 4$, the key nodes may still be inactive when far more than $\gamma$ of the aggregated nodes are active.
\end{itemize} 
In other words, it suffices to take a \emph{snapshot} of the activity state of a very small number of key nodes to get a fair impression of the \emph{overall} activity state of a large signal network. (In order for this claim to be valid, the signal network must avoid absorbing states, for which the distributions of the active and inactive times have to be chosen appropriately: the more neighbours are active, the shorter the time of inactivity and the longer is the time of activity.) The idea behind this claim is that, in a large scale-free and disassortative network, the two or three nodes with the highest degree and centrality properly represent the state of most of the network with minimal overlap, so that adding more key nodes provides little extra coverage and only delays triggering. These structural properties also make the mean-field and homogeneity assumptions reasonable (Sections \ref{ss.MF} and \ref{ss.H}).

Several recent studies have considered network robustness from different perspectives \cite{new-ref1, new-ref2, new-ref3, new-ref4}. In contrast to previous studies on network robustness, which examine the ability of a network to maintain functionality under node or link failures or targeted attacks, our work focuses on identifying the minimal number of key nodes whose state accurately represents the global state of the network. We develop a rigorous mathematical framework, rather than relying solely on empirical data or simulations, and show that in large scale-free and disassortative networks only a very small number of key nodes is needed to represent the overall network state. 

Our work is inspired by the study in \cite{MRFR2017}, where empirical data suggested that two mortality nodes are sufficient to represent the state of the network. Later, in \cite{Aging}, this number of mortality nodes was adopted by referring to \cite{MRFR2017}. The goal of the present paper is to explain, mathematically and rigorously, why such a small number of key nodes works, without relying on empirical data, which can be misleading when the data are not sampled correctly. We generalise the framework, so that it applies not only to the networks studied in \cite{MRFR2017, Aging}, but also to a broader class of networks with a similar underlying structure, such as brain networks and friendship networks. The numerical results in Section~\ref{s.numerics} are obtained solely by solving differential equations that follow directly from the theory.


\subsection{Brain network as signal network}
\label{ss.BN}

A brain network in cognitive neuroscience is an important example of a signal network that reflects how the brain processes information. A brain network can be described as a graph, with nodes representing neural elements (e.g., \ neurons or brain regions) and links representing anatomical connectivity (e.g., \ synapses or axonal projections). Such a network is called a {\em structural brain network}. If the links describe dynamic interactions or statistical dependencies between the nodes, it is called a {\em functional brain network.} Signals received by the body are transmitted to the brain directly via cranial nerves or the spinal cord. Unlike graph theory, which emphasises only connectivity patterns, structural brain networks are influenced by physical and topological distances. Neurons and brain regions that are spatially close are more likely to be connected, whereas neurons or brain regions that are spatially distant are less likely to be connected \cite{BullmoreSporns2009}. In \cite{Sporns2011}, it is argued that the key nodes of a brain network are highly connected and central (the `hubs' in the network) and that these nodes serve as primary points for the overall functioning of the network. Several empirical studies have shown that \emph{small-world} architectures are present in both structural and functional brain networks, in humans and other animals, across a wide range of space and time scales \cite{SW1, SW2, SW3, SW4, BullmoreSporns2009}. The small-world architecture ensures short distances for quick signal processing, while \emph{sufficient clustering} ensures stability (when a node falls out). In addition, the brain has a strong synchronisation capacity, which the brain modules need in order to be functional, similar to the firefly synchronisation phenomenon \cite{Buchanan}.

Regarding the \emph{scale-freeness} of brain networks, the reports differ depending on the data sets and the approach taken. Due to physical constraints and the cost of adding connections in the brain, there are strict upper limits on the number and the density of connections at any given node. As a result, structural brain networks (even those at a large scale) are unlikely to exhibit scale-free degree distributions across a broad range of node degrees \cite{Amaral2000, Sporns2011}. However, an analysis of the degree distribution reveals deviations from a Gaussian or an exponential profile \cite{Sporns2011}. Truncated power-law degree distributions have also been reported for humans \cite{55}.

Functional brain networks show different degree distributions depending on the scale of observation. Scale-freeness evidence has been reported in voxel-level analyses \cite{69,91}, while exponentially truncated power-law distributions have been reported in region-level analyses \cite{70,81}. Moreover, truncated power-law degree distributions have been reported for cats \cite{86}. Disassortativity is often considered a natural characteristic of biological networks \cite{Newman2003}. However, studies show that the structural brain network exhibits an \emph{assortative degree organisation} \cite{Hagmann2008, Braun2012}, while the functional brain network is disassortative \cite{Bettencourt2007}. These findings are further supported by \cite{Lim2019}. Disassortativity helps to ensure that if one of the key nodes is damaged, then the impact is restricted to only a small part of the network, which is crucial for overall stability and functionality.

The \emph{centrality of hubs} means that they are often part of the shortest paths between other nodes, which makes their role important. Hubs play a crucial role in the brain by connecting different signals and controlling the flow of information between separate areas in the brain. Because much of the communication between brain regions passes through these hubs, their function has a significant impact on the overall brain performance. Any perturbation in the state of a hub may quickly spread throughout the network. Moreover, hubs help conserve wiring length and volume because they allow information to travel efficiently without needing long-range connections. As a result, if hubs are damaged or malfunctioning, this can substantially impact the overall function of the brain \cite{Sporns2011}. Understanding the optimal number of hubs in a brain network is therefore crucial, as it provides valuable insight into the overall brain function. Our main interest will be functional brain networks.


\subsection{Aging network as signal network}
\label{ss.AN}

The body can be seen as a network, where nodes represent functional units (organs, cells, or health measurements) and links represent interactions or dependencies between them. Aging can be viewed as the accumulation of damage across this network over time. Damage can spread when one part of the system fails due to internal degradation or external signals. This failure increases the risk of damage in neighboring parts. Some nodes, such as vital organs in the human body, are critical for the overall function, and we can, therefore, call them the key nodes of the aging network. When these critical nodes fail, the entire system collapses, leading to death. In 1825, Benjamin Gompertz \cite{G1825} stated an empirical relationship for the mortality rate $m(t)$ at time $t$, given by 
\begin{equation*}
m(t) \approx \alpha\, \mathrm{e}^{\beta t}, \qquad t \geq 0,
\end{equation*}
with parameters $\alpha,\beta>0$ (and with $\approx$ meaning approximately). This relationship is since referred to as \emph{Gompertz law}. It shows that, as we age, our death risk increases exponentially. The Gompertz law works relatively well for adult ages, roughly from $40$ to $90$ years, but not at very young or very old ages.

\cite{Aging} offers a causal mathematical model that explains aging and mortality through network theory, which offers a novel and mathematical derivation of the Gompertz law. In this network, two nodes are designated as \emph{mortality nodes} (most vital nodes), while the others are designated \emph{aging nodes}. The network is assumed to be scale-free and disassortative. Each node has a state that is either healthy or damaged. The health network evolves over time via a Markov process, where nodes can switch between healthy and damaged. Damage to nodes spreads via the links between them. An individual is considered dead when both mortality nodes become damaged. With the help of a mean-field assumption and a homogeneity assumption, the mortality rate is shown to be the product of the solutions of two non-linear differential equations, and this product approximates the Gompertz law in the age range of $40$ to $80$ years.

The aging network studied in \cite{Aging} closely aligns with the signal network considered in the present paper, where active nodes correspond to damaged nodes and passive nodes correspond to healthy nodes. In \cite{Aging}, the number of mortality nodes is set to two, following the network description of mortality proposed in \cite{MRFR2017}, which is based on a suitable empirical fit to data. This choice lacks a theoretical justification, and the preference for two mortality nodes over other values is primarily based on empirical performance. Investigating the optimal number of key nodes, as is done in the present paper, provides a deeper insight into the structural and functional robustness of aging networks.

\subsection{Friendship network as signal network}
\label{ss.FN}

Another example of a signal network is the friendship network, where nodes represent individuals in the community under consideration, and links represent their (mutual) friendships. In this context, signals can be understood as the spread of information, such as news, rumours, or opinions. A node is considered passive when it is not affected by the incoming information and active when it is and can respond to it.

Empirically, social networks often follow a scale-free degree distribution. A widely used model to describe such networks is the preferential attachment model in \cite{BA1999}. This dynamic random network model assumes that new nodes are more likely to attach to already highly connected nodes (i.e., nodes with a high degree), which leads to negative degree correlation and a scale-free topology \cite{BA1999}. While the classical preferential attachment model tends to produce disassortative networks based purely on degree, \cite{Jackson} proposes more socially grounded mechanisms, which can result in assortative network structures where individuals tend to connect with other individuals similar to themselves. However, under certain conditions, particularly when relationships are hierarchical or are driven by influence rather than mutual friendships, the network may also exhibit disassortative characteristics. Examples are social media networks (like X-follower networks) or advisor-student relationships. Moreover, \cite{PSVV, BBV} find that disassortativity is a result of an environment in which the growth mechanism depends on the fitness of a node in competitive dynamics and weight-driven dynamics, respectively. 

The analogy to the body evolving from single-celled organisms, incrementally adding body parts based on fitness criteria, is tempting. With an environment that is changing in time, different body parts and functionalities become less or more important, and therefore, the fitness of a node may change with time. During an ice age, for example, functionalities that keep the body warm would increase fitness, and so, as the body evolves, new nodes that are added will likely attach to nodes that aid survival in the cold, such as fat cells. The result is a \emph{hierarchical organisation}, meaning the occurrence of many small densely connected clusters, which combine to form larger, less densely connected groups, which again combine to form even larger and even less densely connected groups. Our body also exhibits hierarchical organisation with cells making up tissue, tissues making up organs, organs making up organ systems, and organ systems ultimately making up higher-level functionality such as the ability to read. The analogy with the social context is that the body needs to be efficiently organised.

With the above considerations, our assumption of disassortativity in signal networks is \emph{purely degree-based}. Therefore, a friendship network can also be a good signal network example. This idea is further supported by the well-known friendship paradox, which states that, on average, our friends have more friends than we do. A mathematical analysis of the friendship paradox \cite{HHP} shows that only a small fraction of the nodes may have more connections than their friends do in large preferential attachment networks, such as worldwide friendship networks. Finding the optimal number of key nodes (i.e., highly connected and influential individuals) in such networks is an interesting direction for future research in this context.


\section{Model Framework}
\label{s.modass}

Section~\ref{ss.model} formulates the model. Sections~\ref{ss.MF}--\ref{ss.H} state two \emph{key assumptions} under which the model can be analysed in closed form.   


\subsection{Model}
\label{ss.model}

Recall Figure~\ref{fig:network}. Labelling the \emph{aggregated nodes} by $1,\ldots ,n$, we describe the state of the signal network as  a continuous-time stochastic process
\[
\X(t) = (X_{1}(t),\ldots ,X_{n}(t)), \qquad t\geq 0,
\]
where $X_{i}(t) = 1$ when node $i$ is active at time $t$ and $X_{i}(t) = 0$ when node $i$ is passive at time $t$. We let $\X(0) = (0, \ldots, 0)$. The stochastic process $\X=(\X(t))_{t \geq 0}$ is a time-inhomogeneous Markov chain. 

We denote the set of neighbours of node $i$ by $N(i)$. For each node $i$, we assume that the set $N(i)$ is non-empty. We assume that the rates at time $t$ at which node $i$ transitions to the active state or the passive state, respectively, are given by
\begin{align*}
\Gamma_{+}(i,t) = \lambda(a_{i}(t)), \qquad  \Gamma_{-}(i,t)=\mu(a_{i}(t)),
\end{align*}
where $\lambda(\cdot)$ and $\mu(\cdot)$ are decreasing, repectively, increasing functions, and $a_{i}(t)$ is the fraction of active neighbours of node $i$ at time $t$, i.e.,
\begin{align*}
a_{i}(t) = \frac{1}{|N(i)|}\sum_{j\in N(i)}\mathbbm{1}_{\{X_{j}(t)=1\}}.
\end{align*}
We define the average fraction of active nodes at time $t$ by
\begin{align*}
a(t) = \frac{1}{n}\sum_{j=1}^{n}\mathbbm{1}_{\{X_{j}(t)=1\}}.
\end{align*}

Labelling the \emph{key nodes} by $1,\ldots,k$, we define
\begin{align*}
\tau^\gamma = \inf\big\{t\geq 0\colon\, a(t)\geq \gamma\big\}
\end{align*}
to be the first time when the signal network gets triggered and
\begin{align*}
\tau_k = \inf\big\{t\geq 0\colon\, (X_{1}(t),\ldots ,X_k(t))\in (1,\ldots ,1)\big\}
\end{align*}
be the first time when all $k$ key nodes are active. Due to the central role played by the key nodes, the activation of all key nodes should indicate that most aggregated nodes are active (i.e., the system is triggered). Therefore, we should expect that $\mathbb{E}[\tau^\gamma] \approx \mathbb{E}[\tau_k]$ ($\approx$ means approximately) for suitable values of $k$. For this purpose, we define
\begin{align*}
k_c^-(\gamma) &= \sup\big\{1\leq k\leq n\colon\,\mathbb{E}[\tau^\gamma] \geq \mathbb{E}[\tau_k]\big\}\vee 1,\\
k_c^+(\gamma) &=\inf\big\{1\leq k\leq n\colon\,\mathbb{E}[\tau^\gamma] \leq \mathbb{E}[\tau_k]\big\}\wedge n,
\end{align*}
where we use the convention that $\sup\emptyset=-\infty$ and $\inf\emptyset=+\infty$. Since $\tau_k$ is strictly increasing in $k$, the optimal value of $k$ is defined to be
\begin{align*}
k_c(\gamma)  =
\begin{cases}
k_c^-(\gamma), &\text{if } \big|\mathbb{E}[\tau^\gamma] - \mathbb{E}[\tau_{k_c^-(\gamma)}]\big|\leq  \big|\mathbb{E}[\tau^\gamma] - \mathbb{E}[\tau_{k_c^+(\gamma)}]\big|,\\
k_c^+(\gamma), & \text{otherwise},
\end{cases}
\end{align*}
i.e., the value of $k$ for which $\mathbb{E}[\tau_k]$ is closest to $\mathbb{E}[\tau^\gamma]$.

\begin{Remark}{\bf [Activation rate]}\label{rem:act}
\rm{A key object in analysing $k_c(\gamma)$ is the activation rate at time $t$ for $k$ key nodes, given by
\begin{align*}
m_k(t) = \lim_{\Delta\downarrow 0} \frac{1}{\Delta}\,\P \big\{\tau_k \leq t+\Delta \mid \tau_k \geq t\big\}
= -\frac{1}{\P\{\tau_k\geq t\}}\,\frac{\mathrm{d}}{\mathrm{d}t}\P\{\tau_k\geq t\}.
\end{align*}
We will also look at $m_k(t)$ later. \hfill$\spadesuit$
}
\end{Remark}

Under a {\em mean-field assumption} and a {\em homogeneity assumption} (see below), we estimate $k_c(\gamma)$ for different choices of $\lambda(\cdot)$ and $\mu(\cdot)$ when the number $n$ of aggregated nodes is large. These two assumptions are reasonable due to the scale-free and disassortative nature of the signal network. The Pareto principle, popularised by Richard Koch \cite{Koch}, suggests that roughly $80\%$ of effects result from $20\%$ of causes. Therefore, we consider $\gamma = 0.4$ as a conservative estimate of the fraction of nodes that need to be active to trigger the entire network, \emph{regardless} of the specific nature of the signal network. The theory presented in Sections \ref{s.modass}--\ref{s.math} works for every choice of $\gamma \in (0,1)$, but for the numerics in Section \ref{s.numerics} we use the choice $\gamma = 0.4$.


\subsection{Mean-field assumption}
\label{ss.MF} 

The mean-field assumption is the approximation
\begin{align*}
a_{i}(t) \approx a(t) \approx \hat{a}(t)=\E[a(t)],
\end{align*}
in combination with the fact that rates for the transition to the active state or the passive state are independent of the individual nodes are given by
\begin{align*}
\Gamma_{+}(i,t) \approx \lambda(\hat{a}(t)), \qquad  \Gamma_{-}(i,t)\approx\mu(\hat{a}(t)).
\end{align*} 
We will refer to $\hat{a}(t)$ as the \emph{activation fraction} at time $t$.

In a large network, scale-freeness and disassortativity imply the presence of a fair number of high-degree nodes, which are connected to many low-degree nodes throughout the network, while their union still covers only a small fraction of the entire network. This leads to the nodes' near independence and results in them behaving approximately in the same way. It is, therefore, reasonable to apply the law of large numbers, which suggests that the average state of the nodes is a good approximation for the state of the entire network in support of the above mean-field assumption. 


\subsection{Homogeneity assumption}
\label{ss.H} 

For $i\in\{0,1,\ldots ,k\}$, we define 
\[
S_{i}=\Big\{(z_{1},\ldots, z_{n})\in\{0,1\}^{n}\colon\,\#\{1\leq j\leq k:z_{j}=1\}=i\Big\}.
\]
We assume that the states of the signal network in $S_{0},S_{1},\ldots, S_{k}$ are \emph{further aggregated} into $k+1$ distinct states. See Fig.\ \ref{fig:aggregated} for an illustration. In that case $(k-i)\lambda(\hat{a}(t))$ and $i\mu(\hat{a}(t))$ are the transition rates out of state $S_{i}$ and into state $S_{i+1}$, respectively, $S_{i-1}$. These transition rates are depicted in Figure \ref{fig:aggregated}. In fact, if $Q(t)=(q_{i,j}(t))_{0\leq i,j\leq k-1}$ denotes the infinitesimal generator (transition rate matrix) of the inhomogeneous continuous-time Markov chain ${\bf X}$ at time $t$, then
\begin{align*}
q_{i,j}(t)&=\lim_{\Delta\downarrow 0}\frac{\P\big\{\X (t+\Delta)\in S_{j}\mid \X (t)\in S_{i}\big\}-\delta_{i,j}}{\Delta} 
\\[1mm]&= 
\begin{cases}
(k - i)\lambda(\hat{a}(t)), & \text{if }  0 \leq i \leq k-1,\ j = i+1, \\
i\mu(\hat{a}(t)), & \text{if } 1 \leq i \leq k ,\ j = i-1, \\
-[(k - i)\lambda(\hat{a}(t)) + i\mu(\hat{a}(t))], & \text{if } i = j, \\
0, & \text{otherwise},
\end{cases}
\end{align*}
where $\delta_{i,j}$ is the Kronecker delta (which is $1$ if $i=j$ and $0$ otherwise).

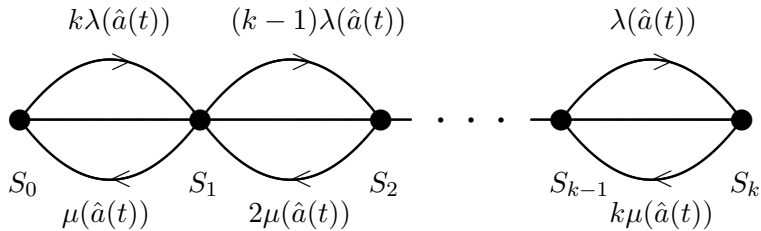
\begin{figure}[htbp]
\begin{center}
\setlength{\unitlength}{0.75cm}
\begin{picture}(12.5,4)(0,-2)
{\thicklines
  \qbezier(0,0)(3,0)(6.5,0)
  \put(7,0){\circle*{0.10}} 
  \put(7.5,0){\circle*{0.10}} 
  \put(8,0){\circle*{0.10}} 
  \qbezier(8.5,0)(10.5,0)(12,0)
  \qbezier(0,0)(1.5,2)(3,0)
  \qbezier(3,0)(1.5,-2)(0,0)
  \qbezier(3,0)(4.5,2)(6,0)
  \qbezier(6,0)(4.5,-2)(3,0)
  \qbezier(9,0)(10.5,2)(12,0)
  \qbezier(12,0)(10.5,-2)(9,0)
}
\put(0,0){\circle*{0.35}} 
\put(3,0){\circle*{0.35}} 
\put(6,0){\circle*{0.35}} 
\put(9,0){\circle*{0.35}} 
\put(12,0){\circle*{0.35}}
\put(-0.2,-1.2){$S_{0}$}
\put(2.8,-1.2){$S_{1}$}
\put(5.8,-1.2){$S_{2}$}
\put(8.8,-1.2){$S_{k-1}$}
\put(11.8,-1.2){$S_{k}$}
\put(0.8,1.5){$k\lambda(\hat{a}(t))$}
\put(0.7,-1.7){$\mu(\hat{a}(t))$}
\put(3.3,1.5){$(k-1) \lambda(\hat{a}(t))$}
\put(3.8,-1.7){$2\mu(\hat{a}(t))$}
\put(9.8,1.5){$\lambda(\hat{a}(t))$}
\put(9.8,-1.7){$k\mu(\hat{a}(t))$}
\put(1.5,-1.1){$<$}
\put(1.5,0.85){$>$}
\put(4.5,0.85){$>$}
\put(4.5,-1.1){$<$}
\put(10.5,0.85){$>$}
\put(10.5,-1.1){$<$}
\end{picture}
\end{center}
\caption{\small The transition rates between distinct states in the aggregated time-inhomogeneous Markov chain.}
\label{fig:aggregated}
\end{figure}


\section{Analytical Results}
\label{s.math}

In Section~\ref{ss.evolact}, with the help of the mean-field assumption and following a similar approach as in \cite{Aging}, we derive a mathematical expression for the activation fraction $\hat{a}(t)$ at time $t$ as the solution to an autonomous differential equation, and for the hitting time $\tau^\gamma$, representing the first time when the network is triggered. In Section~\ref{ss.expact}, with the help of the mean-field and homogeneity assumptions, we derive a mathematical expression for the expectation of the hitting time $\tau_k$, defined as the first time when all $k$ key nodes get active, and in Section~\ref{ss.actrat} for the activation rate of the $k$ key nodes. We use the symbol $\approx$ to emphasise that the expressions are obtained under the two assumptions. Our key results are Theorems~\ref{th:tauk} and \ref{th:Glaw} below. 


\subsection{Evolution of the activation fraction}
\label{ss.evolact}

\begin{Lemma}{\bf [Evolution of the activation fraction]}
\label{Lem1}
The activation fraction $\hat{a}(t)$ at time $t$ is the solution to an autonomous differential equation 
\begin{align}
\label{AP-Ahat}
\frac{\mathrm{d}}{\mathrm{d}t}\hat{a}(t)\approx(1-\hat{a}(t))\lambda(\hat{a}(t))-\hat{a}(t)\mu(\hat{a}(t)),\qquad \hat{a}(0)=0,
\end{align}
where $\lambda(\hat{a}(t))$ is the rate at time $t$ at which a node transitions from the passive state to the active state, and $\mu(\hat{a}(t))$ is the rate at time $t$ at which a node transitions from the active state to the passive state.
\end{Lemma}

\begin{proof}
Since all nodes are initially passive, we have $\hat{a}(0)=0$. Moreover, the rate at which one of the $n(1-\hat{a}(t))$ passive nodes at time $t$ becomes active equals $n(1-\hat{a}(t))\lambda(\hat{a}(t))$, while the rate at which one of the $n\hat{a}(t)$ active nodes at time $t$ becomes passive equals $n\hat{a}(t)\mu(\hat{a}(t))$. Therefore, $n(1-\hat{a}(t))\lambda(\hat{a}(t))-n\hat{a}(t)\mu(\hat{a}(t))$ describes how the number of active nodes evolves (increases or decreases) at time $t$. Divide this value by $n$ to get the fraction of active nodes at time $t$.
\end{proof}

\begin{Remark}{\bf [Link to the trigger time]}
\label{taualpha}
\rm{Under the mean-field assumption, $\tau^\gamma$ is deterministic and is equal to
\begin{align*}
\hat{\tau}^\gamma = \inf\big\{t\geq 0\colon\, \hat{a}(t)\geq \gamma\big\},
\end{align*}
i.e., the first time the activation fraction at time $t$ reaches the threshold level $\gamma$. Therefore, we analyse $\tau^\gamma$ by solving the equation in \eqref{AP-Ahat} and then looking at the first time $t$ at which $\hat{a}(t)\geq \gamma$. 
\hfill$\spadesuit$}
\end{Remark}


\subsection{Expected activation time of the key nodes}
\label{ss.expact}

\begin{Theorem}{\bf [Expected time for key nodes activation]}
\label{th:tauk}
For $k \geq 1$, 
\begin{align}
\label{tauk}
\E[\tau_{k}]\approx\int_{0}^{\infty}(1-p_{k}(t))\, \dd t,
\end{align}
where $p_{k}(t)$ is obtained from the following system of coupled ordinary differential equations:
\begin{equation}
\label{ODEs}
\begin{aligned}
\frac{\ddd}{\ddd t}\,p_{0}(t) &= p_{1}(t)\mu(\hat{a}(t))-p_{0}(t)k\lambda(\hat{a}(t)),\\
\frac{\ddd}{\ddd t}\,p_{j}(t) &= p_{j-1}(t)(k-j+1) \lambda(\hat{a}(t))+p_{j+1}(t) (j+1)\mu(\hat{a}(t))\\
&\qquad \qquad -p_{j}(t)\big[(k - j)\lambda(\hat{a}(t)) + j\mu(\hat{a}(t))\big],
\qquad\qquad\qquad\,\, 1\leq j\leq k-2,\\
\frac{\ddd}{\ddd t}\,p_{k-1}(t) &= p_{k-2}(t)\, 2\lambda(\hat{a}(t))-p_{k-1}(t)\big[\lambda(\hat{a}(t)) 
+ (k-1)\mu(\hat{a}(t))\big],
\qquad k>1,\\
\frac{\ddd}{\ddd t}\,p_{k}(t) &= p_{k-1}(t) \lambda(\hat{a}(t)),
\end{aligned}
\end{equation}
with the following initial conditions:
\begin{align*}
p_{0}(0)=1 \quad \text{ and } \quad p_{1}(0)=\cdots =p_{k}(0)=0.
\end{align*}
(Recall that $\lambda(\hat{a}(t))$ and $\mu(\hat{a}(t))$ are the rates at time $t$ at which a node transitions from passive to active, respectively, from active to passive, and $\hat{a}(t)$ is the activation fraction at time $t$.) Here, for $t\geq 0$ and $j=0,1,\ldots ,k$,
\begin{align*}
p_{j}(t)\in [0,1] \quad\text{ and }\quad \sum_{j=0}^{k}p_{j}(t)=1.
\end{align*}
\end{Theorem}
\begin{proof}
Let $\tilde{\mathbf{X}}$ be a time-inhomogeneous Markov chain on the state space $\{S_0, S_1, \ldots, S_k\}$, with transition rates identical to those of $\mathbf{X}$, except that the transition from $S_k$ to $S_{k-1}$ is removed and $S_k$ is considered as an absorbing state. Let $p_{j}(t)=\P\{\tilde{\X}(t)\in S_{j}\}$ for $0\leq j\leq k$. Since all nodes are initially passive, we have $p_{0}(0)=1$ and $p_{1}(0)=\cdots =p_{k}(0)=0$. 

For $\Delta \downarrow 0$ and $0\leq j\leq k-1$ we have
\begin{align*}
p_{j}(t+\Delta) = \sum_{i=0}^{k}p_{i}(t)\, \P\big\{\tilde{\X} (t+\Delta)\in S_{j}\mid \tilde{\X} (t)\in S_{i}\big\}&
= \sum_{i=0}^{k-1}p_{i}(t)\, \big( q_{i,j}(t)\Delta + \delta_{i,j} +o(\Delta)\big)\\&
=p_{j}(t)+\sum_{i=0}^{k-1}p_{i}(t)\, q_{i,j}(t)\Delta +o(\Delta).
\end{align*}
Similarly, for $\Delta \downarrow 0$ we have
\begin{align*}
p_{k}(t+\Delta) &= \sum_{i=0}^{k}p_{i}(t)\, \P\big\{\tilde{\X} (t+\Delta)\in S_{k}\mid \tilde{\X} (t)\in S_{i}\big\}
= p_{k}(t)+ \sum_{i=0}^{k-1}p_{i}(t)\, q_{i,k}(t)\Delta +o(\Delta).
\end{align*}
This implies that, for $0\leq j\leq k$,
\begin{align*}
\frac{\ddd}{\ddd t}\,p_{j}(t)=\sum_{i=0}^{k-1}p_{i}(t)\, q_{i,j}(t).
\end{align*}
Hence we have
\begin{align*}
&\frac{\ddd}{\ddd t}\,p_{0}(t)=p_{0}(t)\, q_{0,0}(t)+p_{1}(t)\, q_{1,0}(t)=p_{1}(t)\mu(\hat{a}(t))-p_{0}(t)k\lambda(\hat{a}(t)),
\\&\frac{\ddd}{\ddd t}\,p_{k}(t)=p_{k-1}(t)\, q_{k-1,k}(t)=p_{k-1}(t) \lambda(\hat{a}(t)),
\end{align*}
and, for $k>1$,
\begin{align*}
\frac{\ddd}{\ddd t}\,p_{k-1}(t)&=p_{k-2}(t)\, q_{k-2,k-1}(t)+p_{k-1}(t)\, q_{k-1,k-1}(t)\\&
=p_{k-2}(t)\, 2\lambda(\hat{a}(t))-p_{k-1}(t)\big[\lambda(\hat{a}(t)) + (k-1)\mu(\hat{a}(t))\big],
\end{align*}
and, when $k>2$, for $1\leq j\leq k-2$,
\begin{align*}
\frac{\ddd}{\ddd t}\,p_{j}(t)&=p_{j-1}(t)\, q_{j-1,j}(t)+p_{j}(t)\, q_{j,j}(t)+p_{j+1}(t)\, q_{j+1,j}(t)\\&
=p_{j-1}(t)(k-j+1) \lambda(\hat{a}(t))+p_{j+1}(t) (j+1)\mu(\hat{a}(t))-p_{j}(t)\big[(k - j)\lambda(\hat{a}(t)) + j\mu(\hat{a}(t))\big] .
\end{align*}

Finally, 
\begin{align*}
\E[\tau_{k}]=\int_{0}^{\infty}\P\{\tau_{k}>t\}\, \dd t\approx\int_{0}^{\infty}\P\{\tilde{\X}(t)\notin S_{k}\}\, \dd t=\int_{0}^{\infty}(1-p_{k}(t))\, \dd t.
\end{align*}
\end{proof}


\subsection{Activation rates for the key nodes}
\label{ss.actrat}

\begin{Lemma}{\bf [Representation of the activation rate in terms of the pre-trigger probability]}
\label{lem:mt}
For every $t \geq 0$,
\begin{equation*}
\label{eq:mortalitysplit}
\begin{aligned}
m_k(t) &\approx \lambda(\hat{a}(t))\, \mathbb{P}\big\{{\bf X}(t) \in S_{k-1} \mid {\bf X}(u) \not \in S_{k}, 0 \leq u \leq t \big\}.
\end{aligned}
\end{equation*}
(Recall that $\lambda(\hat{a}(t))$ is the rate at time $t$ at which a node transitions from passive to active, and $\hat{a}(t)$ is the activation fraction at time $t$.)
\end{Lemma}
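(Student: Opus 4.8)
The plan is to express the activation rate $m_k(t)$ from Remark~\ref{rem:act} in terms of the absorbing chain $\tilde{\X}$ introduced in the proof of Theorem~\ref{th:tauk}. Since $\tau_k$ is exactly the absorption time of $\tilde{\X}$ into $S_k$, we have $\{\tau_k \geq t\} = \{\tilde{\X}(u) \notin S_k, 0 \leq u \leq t\} = \{\tilde{\X}(t) \notin S_k\}$ (the last equality because $S_k$ is absorbing), so $\P\{\tau_k \geq t\} = 1 - p_k(t)$. First I would write
\begin{align*}
m_k(t) = -\frac{1}{\P\{\tau_k \geq t\}}\,\frac{\ddd}{\ddd t}\,\P\{\tau_k \geq t\} = \frac{1}{1-p_k(t)}\,\frac{\ddd}{\ddd t}\,p_k(t),
\end{align*}
and then substitute the last ODE from Theorem~\ref{th:tauk}, namely $\frac{\ddd}{\ddd t}\,p_k(t) = p_{k-1}(t)\,\lambda(\hat{a}(t))$, to obtain
\begin{align*}
m_k(t) \approx \lambda(\hat{a}(t))\,\frac{p_{k-1}(t)}{1-p_k(t)}.
\end{align*}

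The remaining step is to recognise the ratio $p_{k-1}(t)/(1-p_k(t))$ as the conditional probability appearing in the statement. Here I would note that $1 - p_k(t) = \P\{\tilde{\X}(t) \notin S_k\}$ and $p_{k-1}(t) = \P\{\tilde{\X}(t) \in S_{k-1}\} = \P\{\tilde{\X}(t) \in S_{k-1},\,\tilde{\X}(t) \notin S_k\}$ (the two events coincide since $S_{k-1}$ and $S_k$ are disjoint), so by the definition of conditional probability
\begin{align*}
\frac{p_{k-1}(t)}{1-p_k(t)} = \P\big\{\tilde{\X}(t) \in S_{k-1} \mid \tilde{\X}(t) \notin S_k\big\}.
\end{align*}
Finally I would translate this back to the original chain $\X$: conditioning $\tilde{\X}$ on not yet being absorbed at time $t$ is the same as conditioning $\X$ on the trajectory having avoided $S_k$ up to time $t$, because $\tilde{\X}$ and $\X$ have identical transition rates away from $S_k$; thus $\P\{\tilde{\X}(t) \in S_{k-1} \mid \tilde{\X}(t) \notin S_k\} = \P\{\X(t) \in S_{k-1} \mid \X(u) \notin S_k, 0 \leq u \leq t\}$, which gives the claimed formula.

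The only delicate point — and the step I would be most careful about — is the last identification between the conditioned absorbing chain and the conditioned original chain, and implicitly the legitimacy of differentiating $p_k(t)$ and applying the hazard-rate formula from Remark~\ref{rem:act}. These are routine given the finite state space and the piecewise-continuous rates inherited from $\hat{a}(\cdot)$ (which solves the well-posed autonomous ODE of Lemma~\ref{Lem1}), but I would spell out that $\{\X(u) \notin S_k \text{ for } 0\le u\le t\}$ is precisely the event $\{\tau_k > t\}$ up to the negligible boundary $\{\tau_k = t\}$, so no information is lost in the substitution and the $\approx$ carries the same mean-field/homogeneity caveats as in Theorem~\ref{th:tauk}.
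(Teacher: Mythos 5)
Your argument is correct, and there is no circularity (Theorem~\ref{th:tauk} is proved independently of this lemma), but it is a genuinely different route from the paper's. The paper proves Lemma~\ref{lem:mt} by a direct infinitesimal argument on the original chain ${\bf X}$: it writes the hazard probability $\P\{\tau_k\le t+\Delta\mid\tau_k\ge t\}$ in terms of the events $A_t=\{{\bf X}(u)\notin S_k,\,0\le u\le t\}$, $B_t=\{{\bf X}(t)\in S_{k-1}\}$ and $C_{t,\Delta}$ (hitting $S_k$ within $[t,t+\Delta]$), uses that in an infinitesimal window $S_k$ can only be reached from $S_{k-1}$ in one jump, applies the Markov property to drop $A_t$ from the conditioning, and inserts the jump rate $\P\{C_{t,\Delta}\mid B_t\}\approx\lambda(\hat a(t))\Delta$; no ODEs and no absorbing chain $\tilde{\bf X}$ appear. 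You instead start from the survival function $\P\{\tau_k\ge t\}\approx 1-p_k(t)$ of the absorbing chain, plug the forward equation $\frac{\ddd}{\ddd t}p_k(t)=p_{k-1}(t)\lambda(\hat a(t))$ into the hazard-rate formula of Remark~\ref{rem:act}, and then reinterpret $p_{k-1}(t)/(1-p_k(t))$ as the conditional probability in the statement — which is exactly the content of the paper's Lemma~\ref{lem:a2}, run in reverse. Your route is more computational: it yields the final formula of Theorem~\ref{th:Glaw} immediately, at the price of presupposing Theorem~\ref{th:tauk} and folding Lemma~\ref{lem:a2} into the proof (the identification of the stopped laws of ${\bf X}$ and $\tilde{\bf X}$, which you correctly flag as the delicate step, is where the paper's $\approx$ lives). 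The paper's route keeps Lemma~\ref{lem:mt} as a standalone probabilistic statement about ${\bf X}$, with the factor $\lambda(\hat a(t))$ arising directly from the jump rate out of $S_{k-1}$ rather than from the ODE system, and only afterwards converts the conditional probability into $p$-ratios.
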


\begin{proof}
In a short time length, ${\bf X}$ can only reach $S_{k}$ by going from $S_{k-1}$ to $ S_{k}$ in a single jump. Hence, taking
\[
\begin{aligned}
A_t &= \{{\bf X}(u) \not \in S_{k}, 0 \leq u \leq t\}=\{\tau_k \geq t\},\\
B_t &= \{{\bf X}(t) \in S_{k-1}\},\\
C_{t,\Delta} &=  \{\exists\,0 \leq u \leq \Delta\colon\, {\bf X}(t+u) \in S_{k}\},
\end{aligned}
\]
we have, for $\Delta \downarrow 0$,
\[
\begin{aligned}
\mathbb{P}\big\{\tau_k \leq t+\Delta \mid  \tau_k \geq t\big\}
= \frac{\mathbb{P}\{A_t \cap B_t \cap C_{t,\Delta} \}}{\mathbb{P}\{A_t\}}+o(\Delta)&
= \mathbb{P}\{C_{t,\Delta} \mid A_t\cap B_t\}\,\mathbb{P}\{B_t \mid A_t\}+o(\Delta)\\&
= \mathbb{P}\{C_{t,\Delta} \mid B_t\}\,\mathbb{P}\{B_t \mid A_t\}+o(\Delta),
\end{aligned}
\]
where the last equality follows from the Markov property at time $t$. Noting that the rate of jumping from $S_{k-1}$ to $S_k$ is $\lambda(\hat{a}(t))$, we have
\begin{align*}
\mathbb{P}\{C_{t,\Delta} \mid B_t\}\approx\lambda(\hat{a}(t))\,\Delta + o(\Delta),\qquad \Delta \downarrow 0,
\end{align*}
and hence
\[
\begin{aligned}
m_k(t) =\lim_{\Delta \downarrow 0} \frac{1}{\Delta}\mathbb{P}\big\{\tau_k \leq t+\Delta \mid  \tau_k \geq t\big\}
 \approx \lambda(\hat{a}(t))\,\mathbb{P}\{B_t \mid A_t\}.
\end{aligned}
\]
\end{proof}

\begin{Lemma}{\bf [Computation of the pre-trigger probability]}
\label{lem:a2}
For every $k \geq 1$,
\[
\mathbb{P}\big\{{\bf X}(t) \in S_{k-1} \mid {\bf X}(u) \not \in S_{k}, 0 \leq u \leq t \big\}
\approx \frac{p_{k-1}(t)}{1-p_{k}(t)},
\]
where $p_{k-1}(t)$ and $p_{k}(t)$ are defined as in Theorem \ref{th:tauk}, i.e., as the solutions of the ordinary differential equations in \eqref{ODEs}.
\end{Lemma}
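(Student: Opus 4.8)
The plan is to recognise that the event $\{{\bf X}(u) \notin S_k,\ 0 \leq u \leq t\}$ is exactly the event $\{\tau_k \geq t\}$, which is the same conditioning event that defines the absorbed chain $\tilde{\bf X}$ from the proof of Theorem~\ref{th:tauk}. Since $S_k$ is absorbing for $\tilde{\bf X}$, the process ${\bf X}$ restricted to the event that it has not yet hit $S_k$ by time $t$ has the same law as $\tilde{\bf X}$ restricted to the same event; in particular, the conditional probability of being in $S_{k-1}$ at time $t$ given non-absorption is the same for both. Hence I would write
\[
\mathbb{P}\big\{{\bf X}(t) \in S_{k-1} \mid {\bf X}(u) \notin S_k,\ 0 \leq u \leq t\big\}
= \frac{\mathbb{P}\{{\bf X}(t) \in S_{k-1},\ \tau_k \geq t\}}{\mathbb{P}\{\tau_k \geq t\}}
\approx \frac{\mathbb{P}\{\tilde{\bf X}(t) \in S_{k-1}\}}{\mathbb{P}\{\tilde{\bf X}(t) \notin S_k\}},
\]
where the $\approx$ carries the mean-field and homogeneity approximations already invoked in Theorem~\ref{th:tauk}.

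Next I would identify the numerator and denominator with the quantities $p_j(t)$ from Theorem~\ref{th:tauk}. For the numerator, on the event $\{\tau_k \geq t\}$ the chain has not been absorbed, and being in $S_{k-1}$ is consistent with this; since $p_{k-1}(t) = \mathbb{P}\{\tilde{\bf X}(t) \in S_{k-1}\}$ and $\{\tilde{\bf X}(t) \in S_{k-1}\} \subseteq \{\tau_k \geq t\}$ (once absorbed at $S_k$ the chain stays there), the numerator equals $p_{k-1}(t)$. For the denominator, $\{\tau_k \geq t\} = \{\tilde{\bf X}(t) \notin S_k\}$ up to the approximation, and $\mathbb{P}\{\tilde{\bf X}(t) \notin S_k\} = 1 - p_k(t)$ by the normalisation $\sum_{j=0}^k p_j(t) = 1$ stated in Theorem~\ref{th:tauk}. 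Combining these gives the claimed ratio $p_{k-1}(t)/(1-p_k(t))$.

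I should also note that the denominator $1 - p_k(t)$ is strictly positive for every finite $t$: starting from $p_0(0)=1$, the probability of non-absorption by any finite time is positive because each transition out of $S_0, S_1, \ldots, S_{k-1}$ occurs at a finite rate (assuming $\lambda, \mu$ are finite on $[0,1]$), so the ratio is well defined. This is a brief remark rather than an obstacle.

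The only genuinely delicate point is the bookkeeping around the $\approx$ symbol: strictly speaking, $\tilde{\bf X}$ is already the mean-field/homogeneity-reduced chain, so the identification of $\mathbb{P}\{{\bf X}(t) \in S_{k-1},\ \tau_k \geq t\}$ with $p_{k-1}(t)$ is an approximation of the same nature as in Theorem~\ref{th:tauk}, not an exact equality — one must be consistent in treating the reduced chain as the object to which $p_j(t)$ refers and in not claiming more precision than the earlier results provide. Beyond that accounting, the proof is a short two-line manipulation of conditional probabilities together with the normalisation identity, so I expect no real obstacle; the main care is simply to state clearly that the conditioning event is $\{\tau_k \geq t\}$ and that on this event $\tilde{\bf X}$ and ${\bf X}$ agree in distribution.
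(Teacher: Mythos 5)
Your proposal is correct and follows essentially the same route as the paper: write the conditional probability as a ratio, identify the event $\{{\bf X}(u)\notin S_k,\,0\leq u\leq t\}$ with non-absorption of the auxiliary chain $\tilde{\bf X}$, and replace numerator and denominator by $\mathbb{P}\{\tilde{\bf X}(t)\in S_{k-1}\}=p_{k-1}(t)$ and $\mathbb{P}\{\tilde{\bf X}(t)\notin S_k\}=1-p_k(t)$ under the same $\approx$ as in Theorem~\ref{th:tauk}. Your extra remarks on positivity of $1-p_k(t)$ and on the bookkeeping of the approximation are harmless additions but not needed.
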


\begin{proof}
Define $\tilde{\mathbf{X}}$ as in the proof of Theorem \ref{th:tauk}. Also, let $p_{j}(t)=\P\{\tilde{\X}(t)\in S_{j}\}$ for $0\leq j\leq k$. Then
\begin{align*}
\mathbb{P}\big\{{\bf X}(t) \in S_{k-1} \mid {\bf X}(u) \not \in S_{k}, 0 \leq u \leq t \big\}
&=\dfrac{\mathbb{P}\big\{\{{\bf X}(t) \in S_{k-1}\}\cap\{{\bf X}(u) \not \in S_{k}, 0 \leq u \leq t \}\big\}}{\mathbb{P}\big\{{\bf X}(u) \not \in S_{k}, 0 \leq u \leq t \big\}}\\&
\approx\dfrac{\mathbb{P}\big\{\tilde{\X}(t) \in S_{k-1}\big\}}{\mathbb{P}\big\{\tilde{\X}(t) \not \in S_{k} \big\}}= \frac{p_{k-1}(t)}{1-p_{k}(t)}.
\end{align*}
\end{proof}

\begin{Theorem}{\bf [Activation rate for key nodes]}
\label{th:Glaw}
For every $k \geq 1$,
\begin{equation}
\label{eq:GMlaw}
m_{k}(t) \approx \lambda(\hat{a}(t))\,\frac{p_{k-1}(t)}{1-p_{k}(t)},
\end{equation}
where $p_{k-1}(t)$ and $p_{k}(t)$ are defined as in Theorem \ref{th:tauk}, i.e., as the solutions of the ordinary differential equations in \eqref{ODEs}.
\end{Theorem}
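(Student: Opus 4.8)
The proof of Theorem~\ref{th:Glaw} is essentially immediate once Lemmas~\ref{lem:mt} and~\ref{lem:a2} are in place, so the plan is simply to chain the two lemmas together. First I would invoke Lemma~\ref{lem:mt}, which gives
\[
m_k(t) \approx \lambda(\hat{a}(t))\,\mathbb{P}\big\{{\bf X}(t)\in S_{k-1}\mid {\bf X}(u)\notin S_k,\ 0\leq u\leq t\big\}.
\]
Then I would substitute the approximation from Lemma~\ref{lem:a2}, which identifies the conditional pre-trigger probability on the right-hand side with $p_{k-1}(t)/(1-p_{k}(t))$, where $p_{k-1}$ and $p_k$ are the components of the solution to the coupled ODE system in Theorem~\ref{th:tauk}. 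Composing the two approximations yields
\[
m_k(t) \approx \lambda(\hat{a}(t))\,\frac{p_{k-1}(t)}{1-p_{k}(t)},
\]
which is the claimed identity.

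The only point requiring a word of care is that both lemmas carry the symbol $\approx$, reflecting the mean-field and homogeneity assumptions under which the underlying Markov generator $Q(t)$ was derived (Section~\ref{ss.H}) and under which the absorbed chain $\tilde{\bf X}$ faithfully tracks ${\bf X}$ up to the trigger time. I would note explicitly that the two approximations are consistent — they are both made at the level of replacing ${\bf X}$ by its aggregated, absorbed mean-field counterpart $\tilde{\bf X}$ — so that composing them does not compound any error beyond what is already present in each lemma. One should also observe that $1-p_k(t) = \mathbb{P}\{\tilde{\bf X}(t)\notin S_k\} > 0$ for all finite $t$ (since $p_k(0)=0$ and $p_k$ solves a linear ODE driven by the bounded rate $\lambda(\hat{a}(\cdot))$), so the quotient is well defined for every $t\geq 0$.

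Since there is no genuine obstacle here — the statement is a corollary of the two preceding lemmas — the "hard part" is purely expository: making clear that Theorem~\ref{th:Glaw} is the payoff of the decomposition strategy, namely that the activation rate $m_k(t)$ of the $k$ key nodes factors into the microscopic jump rate $\lambda(\hat{a}(t))$ out of $S_{k-1}$ times the conditional probability of currently sitting in $S_{k-1}$ given that the system has not yet triggered, and that this conditional probability is computable from the same ODE system that already delivered $\mathbb{E}[\tau_k]$ in Theorem~\ref{th:tauk}. The proof itself is two lines.
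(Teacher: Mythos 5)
Your proposal is correct and matches the paper's proof, which is exactly the one-line combination of Lemmas~\ref{lem:mt} and~\ref{lem:a2}; your extra remarks on the consistency of the two approximations and the positivity of $1-p_k(t)$ are sound but not needed.
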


\begin{proof}
Combine Lemmas~\ref{lem:mt}--\ref{lem:a2} to obtain \eqref{eq:GMlaw}.
\end{proof}


\section{Numerical Results}
\label{s.numerics}

In Section~\ref{ss.optch}, we investigate the optimal number of key nodes $k_c(\gamma)$ for five specific choices of the rate functions $\lambda(\cdot)$ and $\mu(\cdot)$ governing the transitions to the active and to the passive state, respectively. In Section~\ref{ss.optag}, we do the same for the aging network and the rate functions simulated in \cite{MRFR2017}. The numerics is based on the approach mentioned in Remark~\ref{taualpha} and the choice $\gamma=0.4$ mentioned below Remark~\ref{rem:act}.


\subsection{Optimal number of key nodes for various rate functions}
\label{ss.optch}

We assume that $\lambda(a) < \mu(a)$ for all $a\in (0,1]$, which ensures that the system does not exhibit unbounded growth in activation and, instead, tends to a stable and predominantly passive state.
We explore several rate functions (constant, exponential, power-law, logarithmic) and evaluate $\mathbb{E}[\tau_k]$ (see~\eqref{tauk}) for $k=1$ to $6$. The corresponding results are presented in Table \ref{tab:merged}. For the constant rate functions $\lambda(a) = 0.45$ and $\mu(a) = 0.65$, the optimal number of key nodes appears to be $k_c(\gamma) = 2$. The same optimal value, $k_c(\gamma) = 2$, arises in both the exponential-constant ($\lambda(a) = \mathrm{e}^{-a}$, $\mu(a) = 1$) and constant-exponential ($\lambda(a) = 1$, $\mu(a) = \mathrm{e}^a$) scenarios. For the power-constant rate functions $\lambda(a) = (a+1)^{-1}$ and $\mu(a) = 1$, the numerical results suggest $k_c(\gamma) = 1$; however, due to the proximity of $\hat{\tau}^{\gamma}$ to the estimate of $\mathbb{E}[\tau_2]$, and the potential influence of numerical error, the value $k = 2$ may still be regarded as a reasonable estimate. Lastly, for the logarithmic rate functions $\lambda(a) = (\log(3+a))^{-1}$ and $\mu(a) = \log(3+a)$, the numerical findings again support $k_c(\gamma) = 2$ as the optimal number of key nodes.

\vspace*{3mm}
\setlength{\tabcolsep}{10pt}
\begin{table}[htbp]
\caption{Numerical estimates of the expected hitting time $\mathbb{E}[\tau_k]$ and the trigger time $\hat{\tau}^{\gamma}$  for various choices of the rate functions and $\gamma=0.4$. The optimal number of key nodes $k_c(\gamma)$, for which $\mathbb{E}[\tau_k]$ is closest to $\hat{\tau}^{\gamma}$, corresponds to the $k$-values indicated in boldface. Recall Remark~\ref{taualpha}.}
\vspace*{-3mm}
\label{tab:merged}
\centering
\small
\renewcommand{\arraystretch}{1.3}
\begin{tabular}{@{}cccccc@{}}
\toprule
\textbf{$k$} &
\begin{tabular}{@{}c@{}}$\lambda(a) = 0.45$ \\ $\mu(a) = 0.65$\end{tabular} &
\begin{tabular}{@{}c@{}}$\lambda(a) = \eee^{-a}$ \\ $\mu(a) = 1$\end{tabular} &
\begin{tabular}{@{}c@{}}$\lambda(a) = 1$ \\ $\mu(a) = \eee^a$\end{tabular} &
\begin{tabular}{@{}c@{}}$\lambda(a) = (a+1)^{-1}$ \\ $\mu(a) = 1$\end{tabular} &
\begin{tabular}{@{}c@{}}$\lambda(a) = (\log(3+a))^{-1}$ \\ $\mu(a) = \log(3+a)$\end{tabular} \\
\midrule
1                              & 1.37 & 0.79 & 0.62 & \textbf{0.78}  & 0.72 \\
2                           & \textbf{4.94} & \textbf{3.14} & \textbf{2.22} & 2.95 & \textbf{2.69} \\
3                              & 9.36 & 6.19 & 4.27 & 5.66  & 5.22 \\
4                              & 17.48 & 11.94 & 8.12 & 10.61 & 9.97 \\
5                              & 33.35 & 23.45 & 15.84 & 20.19 & 19.49 \\
6                              & 65.08 & 47.34 & 31.88 & 39.49 & 39.30 \\
\midrule
$\hat{\tau}^{\gamma}$    & 3.46 & 2.79 & 2.00 & 1.61 & 3.11 \\
\bottomrule
\end{tabular}
\end{table}


\subsection{Optimal number of key nodes in an aging network}
\label{ss.optag}

Consider the rate functions chosen in \cite{MRFR2017} (and analysed in \cite{Aging}),
\begin{equation}
\label{rate-aging}
\lambda(a) = \Gamma_0\,\eee^{r_+ a},  \qquad \mu(a) = \frac{\Gamma_0}{R} \,\eee^{-r_- a}, \qquad a \in [0,1],
\end{equation}
where the parameters are chosen to be $r_+ = 10.27$, $r_- = 6.5$, $R = 1.5$ and $\Gamma_0 = 0.00113$.

\setlength{\tabcolsep}{8pt}
\begin{table}[htbp]
\caption{Numerical estimates of the expected hitting times $\mathbb{E}[\tau_k]$ for the rate functions in \eqref{rate-aging}.}
\vspace*{-3mm}
\label{tab:etauk}
\centering
\small
\renewcommand{\arraystretch}{1.5}
\begin{tabular}{lccccccc}
\toprule
\textbf{$k$} & 1 & 2 & 3 & 4 & 5 & 6 & 7 \\
\midrule
\textbf{$\mathbb{E}[\tau_k]$} & 86.94 & 95.49 & 97.14 & 97.62 & 97.80 & 97.89 & 97.93 \\
\bottomrule
\end{tabular}
\end{table}


\setlength{\tabcolsep}{8pt}
\begin{table}[htbp]
\caption{Numerical estimates of $\hat{\tau}^{\gamma}$ for various values of $\gamma$ for the rate functions in \eqref{rate-aging}.}
\vspace*{-3mm}
\label{tab:taualpha}
\centering
\small
\renewcommand{\arraystretch}{1.5}
\begin{tabular}{lccccc}
\toprule
\textbf{$\gamma$} & 0.2 & 0.3 & 0.4 & 0.5 & 0.6 \\
\midrule
\textbf{$\hat{\tau}^{\gamma}$} & 81.79 & 91.19 & 95.05 & 96.69 & 97.40 \\
\bottomrule
\end{tabular}
\end{table}

\vspace*{3mm}
A comparison of Tables~\ref{tab:etauk}--\ref{tab:taualpha} shows that $k_c(\gamma) = 1$ for $\gamma = 0.2, 0.3$, $k_c(\gamma) = 2$ for $\gamma = 0.4$, and $k_c(\gamma) = 3$ for $\gamma = 0.5$. For $\gamma = 0.6$, although the numerical estimates suggest that $k_c(\gamma) = 4$, the small difference of $\hat{\tau}^{0.6}$ from the estimate of $\mathbb{E}[\tau_3]$, along with the potential impact of numerical errors, indicates that $k_c(\gamma) = 3$ is also a reasonable and efficient choice. Note that for the parameter values used in~\eqref{rate-aging}, the threshold $\gamma = 0.4$ is particularly appropriate, as death can occur when approximately half of the aggregated nodes are damaged. Therefore, the optimal number of key nodes is $k_c(\gamma) = 2$. Even for a conservative choice of $\gamma \in [0.4, 0.6]$, it is reasonable to take $k_c(\gamma) \in \{2, 3\}$. This provides a mathematical justification for using two mortality nodes in a health network, as adopted in~\cite{MRFR2017, Aging}.

We also note that the rate functions in \eqref{rate-aging} fail to satisfy the inequality $\lambda(a)<\mu(a)$ for all $a \in (0,1]$, which makes it somewhat different from the five choices used in Section~\ref{ss.optch}. 

\subsection{Sensitivity to the choice of transition rate functions.}
To assess the robustness of our results, we compared several qualitatively different yet theoretically plausible functions for the transition rates $\lambda(\cdot)$ and $\mu(\cdot)$ in Sections \ref{ss.optch} and \ref{ss.optag}: constant, exponential, power-law, logarithmic, and the form proposed in \cite{MRFR2017}. These choices cover a wide range of behaviours, from constant rates to rates that fastly or slowly decrease or increase in time, for both the transition rate $\lambda(\cdot)$ from the passive state to the active state and the transition rate $\mu(\cdot)$ from the active state to the passive state. Interestingly, we found that, even though the functions chosen are very different, the threshold values $k_c(\gamma)$ for $\gamma=0.4$ were almost the same for all choices. This means that, apparently, our results hardly depend on the precise form of $\lambda(\cdot)$ and $\mu(\cdot)$, which observation is a type of sensitivity analysis.


\section{Conclusion}

We have shown that, for large scale-free and disassortative signal networks, only a very small number of nodes need to be monitored to get a fair impression of the overall state of the network. This dramatic form of \emph{data aggregation} is important because for large networks it is difficult, if not impossible, to monitor the overall state. We have further shown that the reduction is \emph{universal} and is not related to the underlying structure of the network. Three examples serve as an illustration: brain networks, aging networks, and friendship networks. Based on two assumptions -- mean-field and homogeneity -- we have shown how the optimal number of key nodes can be identified analytically and computed numerically for different activation and deactivation rates.  

Beyond their theoretical importance, our findings have practical implications. For brain networks, monitoring a small set of hubs can help to obtain a faster and less invasive diagnosis of neurological disorders. In aging networks, it can help to estimate life expectancy by tracking the health of a few critical organs or health indicators. In social networks, it can help to guide efficient information spread or contain the spread of harmful rumours.



\section*{Data Availability}
No external datasets were used in this study. All data generated during this study are included in this published article. Numerical results were obtained through solving equations using R software.

\section*{Authors Contribution Statement} IL and BH conceptualised the design of the study, FdH and AP performed the mathematical analysis, and all authors jointly wrote the manuscript text.

\section*{Additional Information}

We have no competing conflicts of interest to disclose.


\end{document}